\newtheorem{theorem}{Theorem}[section]
\theoremstyle{definition}
\newtheorem{definition}[theorem]{Definition}
\newtheorem{example}[theorem]{Example}
\theoremstyle{remark}
\newtheorem{remark}[theorem]{Remark}
\numberwithin{equation}{section}
\newcommand{\ds}{\mathrm ds}
\newcommand{\red}{\color{red}}
\renewcommand*\env@matrix[1][\arraystretch]{%
  \edef\arraystretch{#1}%
  \hskip -\arraycolsep
  \let\@ifnextchar\new@ifnextchar
  \array{*\c@MaxMatrixCols c}}
\begin{document}



\title{Roundoff error problem in L2-type methods for time-fractional problems}          


\author{Chaoyu Quan \thanks{ SUSTech International Center for Mathematics, Southern University of Science and Technology, Shenzhen, China,\quad
Guangdong Provincial Key Laboratory of Computational Science and Material Design, Southern University of Science and Technology, Shenzhen, China (quanchaoyu@gmail.com).}
\quad Shijie Wang \thanks{Department of Mathematics, Southern University of Science and Technology, Shenzhen, China, (pariwong@163.com).}  \quad Xu Wu \thanks{Department of Mathematics,  Harbin Institute of Technology, Harbin, 150001,China,  \quad Department of Mathematics, Southern University of Science and Technology, Shenzhen, China (xwsustech@gmail.com).}}


  \date{}

\maketitle

\begin{abstract}
Roundoff error problems have occurred frequently in interpolation methods of time-fractional equations, which can lead to undesirable results such as the failure of optimal convergence. 
These problems are essentially caused by catastrophic cancellations.
Currently, a feasible way to avoid these cancellations is using the Gauss--Kronrod quadrature to approximate the integral formulas of coefficients rather than computing the explicit formulas directly for example in the L2-type methods. 
This nevertheless increases computational cost and arises additional integration errors.
In this work, a new framework to handle catastrophic cancellations is proposed, in particular, in the computation of the coefficients for standard and fast L2-type methods on general nonuniform meshes.
We propose a concept of $\delta$-cancellation and then some threshold conditions ensuring that $\delta$-cancellations will not happen. 
If the threshold conditions are not satisfied, a Taylor-expansion technique is proposed to avoid $\delta$-cancellation. 
Numerical experiments show that our proposed method performs as accurate as the Gauss--Kronrod quadrature method and meanwhile much more efficient. 
This enables us to complete long time simulations with hundreds of thousands of time steps in short time. 
\end{abstract}


\begin{keywords} roundoff error problem, catastrophic cancellation
 , time-fractional problem,  L2-type methods,  sum-of-exponentials approximation
\end{keywords}

\section{Introduction}
The Caputo derivative \cite{caputo1967linear} has been widely-used to model phenomena which takes account of interactions within the past and problems with nonlocal properties. It is defined as
\begin{equation*}
    \partial_t^\alpha u \coloneqq \frac{1}{\Gamma(1-\alpha)} \int_0^t \frac{u'(s)}{(t-s)^\alpha} \, \ds.
\end{equation*}
One issue is how to compute the Caputo derivative numerically, which is important in numerical simulations of time-fractional problems.
Some approximations have been developed and analyzed, including the piecewise polynomial interpolation methods (such as L1, L2-1$_\sigma$ and L2 schemes) on uniform meshes \cite{langlands2005accuracy,sun2006fully, alikhanov2015new, gao2014new, lv2016error, quan2022energy} and nonuniform meshes \cite{stynes2017error, kopteva2019error, chen2019error, kopteva2020error, kopteva2021error, liao2018sharp, liao2019discrete,liao2018second}, discontinuous Galerkin methods \cite{mustapha2014discontinuous}, and convolution quadrature correction methods \cite{jin2017correction,jin2020subdiffusion}.
The solutions to time-fractional problems typically admit weak singularities, which leads to deterioration of convergence in the case of uniform meshes for interpolation methods and then inspires researchers to use nonuniform meshes. 
Another important issue is the CPU time and storage problem due to the nonlocality of the fractional derivatives.
To reduce the computation cost, fast algorithms are proposed, for example, using the sum-of-exponentials (SOE) approximation to the Caputo derivative \cite{li2021second, liu2022unconditionally,ke2015fast,zhu2019fast}.

Despite of many works on the convergence analysis, the aforementioned interpolation methods could encounter roundoff error problems in practice where the accuracy is destroyed as well as the convergence rate. 
However, only a few literatures have discussed about this issue.
For example, in \cite{liao2018second,chen2019error}, the authors use the Gauss--Kronrod quadrature to compute the coefficients in the second-order L2-1$_\sigma$ method for time-fractional diffusion problem.
Even earlier, in \cite{jiang2017fast}, Jiang et al. encounter the cancellation error in the fast L1 method and use the Taylor expansion of exponentials with a small number of terms to overcome it. 
Despite of some attempts, a systematic analysis of the roundoff error problem in the interpolation methods remains a gap. 



In this work, we focus on solving the roundoff error problems in standard and fast L2 methods on general nonuniform time meshes.
We consider particularly the L2-type method proposed in \cite{lv2016error} but on general nonuniform meshes. 
When calculating the explicit expressions of L2 coefficients, catastrophic cancellations could happen, the phenomenon that subtracting good approximations to two nearby numbers may yield a very bad approximation to the difference of the original numbers. 
This can lead to completely wrong simulations especially for a large number of time steps.
To quantify the phenomena of catastrophic cancellation, we first propose a concept of $\delta-$cancellation, where $\delta$ denotes the tolerance of relative error of the approximated difference (due to rounding) and the exact difference.
Then we reformulate the standard L2 coefficients into a combination of $I_1,~I_2$ in \eqref{eq:stdI} and  the fast L2 coefficients into a combination of $J_1,~J_2$ in \eqref{eq:soeJ} to make the analysis of $\delta$-cancellation simpler.  
For any fixed $\alpha\in (0,1)$ and $\delta > 12\delta_0$, we define the following thresholds
    \begin{equation*}
    \begin{aligned}
           \theta_{s,1}=\frac{2\delta_0}{(1-\alpha)\delta},\ \theta_{s,2}=\left(\frac{6\delta_0}{(1-\alpha)\delta}\right)^{\frac12},
              \ \theta_{f,1}&=\frac{4\delta_0}{\delta},\ \theta_{f,2}=\left(\frac{12\delta_0}{\delta}\right)^\frac12,
    \end{aligned}
    \end{equation*}
where $\delta_0$ is the machine error ($\delta_0\approx 2.22\times 10^{-16}$ with 64 bits in double precision).
We show that for any $j$ and $k$, if the following threshold condition
\begin{equation*}
    \tau_j/(t_k - t_{j-1}) \ge\theta_{s,i} 
\end{equation*}
holds where $\tau_j$ is the $j$th time step and $t_k$ is the $k$th time node, then $I_i$ will not meet $\delta$-cancellation ($i=1,2$). 
Similarly,  for any $\ell$ and $k$, if the following threshold condition
\begin{equation*}
    \theta^{\ell}\tau_{k-1} \ge\theta_{f,i}
\end{equation*}
holds where $\theta^\ell$ is the $\ell$th quadrature node in the SOE approximation, then $J_i$ will not meet $\delta$-cancellation ($i=1,2$). 
In other words, if the above threshold conditions are satisfied, one can compute the L2 coefficients directly from the explicit expressions (see Theorem \ref{lemma1}). 
If the threshold conditions are not satisfied, we propose to use Taylor's expansion to approximate $I_i$ and $J_i$ to avoid $\delta$-cancellation,  called the Taylor-expansion technique in this work.
Usually, only several terms are needed to ensure the relative error of Taylor approximation within machine error.
Numerical experiments show that this method using the threshold conditions plus the Taylor-expansion technique, called TCTE method, can perform as accurate as the Gauss--Kronrod quadrature, but much more efficient.
This enables us to implement long time simulations with hundreds of thousands of time steps.

This work is organized as follows. 
In Section~\ref{sect2}, we describe the roundoff error problems in the standard and fast L2 methods. 
In Section~\ref{sect3}, we propose the concept of $\delta-$cancellation and provide the threshold conditions when $\delta-$cancellation will not happen, and then proposed the Taylor-expansion technique to avoid $\delta-$cancellation if the threshold conditions are not satisfied.
In Section~\ref{sect4}, some numerical experiments are given to verify the efficiency and accuracy of the proposed TCTE method. 
Some conclusions are provided in the final section.

\section{Catastrophic cancellations in standard and fast  L2 formulas}\label{sect2}
In this part, we describe the catastrophic cancellations encountered in the L2-type methods, which can lead to bad results when computing the explicit formulas of L2 coefficients.
Note that such problems can also happen in L1 and L2-1$_\sigma$ methods.

\subsection{Reformulation of standard L2 formula}
Given a general nonuniform time mesh $t_0<t_1<\ldots<t_k$, the Caputo fractional derivative at time $t_k$ is approximated by the L2 discrete fractional operator (see \cite{quan2022h} for details)
\begin{equation}\label{eq:oriL2}
\begin{aligned}
L_1^\alpha u& = \frac{u^1-u^0}{\Gamma(2-\alpha)\tau_1^{\alpha}},\\
    L_k^\alpha u 
    & = \frac{1}{\Gamma(1-\alpha)}\left( \sum_{j=1}^{k-1} \left(a_{j}^{(k)} u^{j-1} + b_{j}^{(k)} u^j + c_{j}^{(k)} u^{j+1} \right) 
    + a^{(k)}_k u^{k-2} + b^{(k)}_k  u^{k-1} + c^{(k)}_k u^{k}\right), \quad k\geq 2,
\end{aligned}   
\end{equation}
where $ b_{j}^{(k)}= -a_{j}^{(k)}- c_{j}^{(k)}$ for $1\leq j\leq k$,  and for $1\leq j\leq k-1$,
\begin{align}\label{eq:akj}
   a_{j}^{(k)} & =  \int_{t_{j-1}}^{t_j} \frac{2s -t_j-t_{j+1}}{\tau_{j}(\tau_{j}+\tau_{j+1})} \frac{1}{(t_k-s)^\alpha}\,\ds 
  ,\quad
    c_{j}^{(k)}  =  \int_{t_{j-1}}^{t_j} \frac{2s -t_{j-1}-t_{j}}{\tau_{j+1}(\tau_{j}+\tau_{j+1})} \frac{1}{(t_k-s)^\alpha}\,\ds,
\end{align}
and
\begin{equation}\label{akkckk}
\begin{aligned}
    a^{(k)}_{k}  =  \int_{t_{k-1}}^{t_k} \frac{2s -t_{k-1}-t_{k}}{\tau_{k-1}(\tau_{k-1}+\tau_{k})} \frac{1}{(t_k-s)^\alpha}\,\ds 
 ,\quad
   c^{(k)}_{k}  =  \int_{t_{k-1}}^{t_k} \frac{2s -t_{k-2}-t_{k-1}}{\tau_{k}(\tau_{k-1}+\tau_{k})} \frac{1}{(t_k-s)^\alpha}\,\ds.
\end{aligned}
\end{equation}Note that we have the following relationship:
\begin{equation}\label{eq:tilde}
     c_j^{(k)}  
   = \frac{\tau_{j}}{\tau_{j+1}}a_j^{(k)}+ \tilde c_j^{(k)} \quad \text{with }\quad \tilde c_j^{(k)} \coloneqq \int_{t_{j-1}}^{t_j}\frac{1}{\tau_{j+1}(t_k-s)^\alpha}\, \ds.
\end{equation}
Then we can reformulate \eqref{eq:oriL2} for $k\ge 2$ as
\begin{equation}\label{eq:refL2}
\begin{aligned}
    L_k^\alpha u 
    & = \frac{1}{\Gamma(1-\alpha)}\left( \sum_{j=1}^{k-1} \left(a_{j}^{(k)}\left( {\tau_j}/{\tau_{j+1}}\delta_{j+1}u-\delta_ju\right) + \tilde c_{j}^{(k)}\delta_{j+1} u \right) 
    - a^{(k)}_k \delta_{k-1}u+ c^{(k)}_k \delta_{k}u\right), 
\end{aligned}   
\end{equation}
where $\delta_ju\coloneqq u^j-u^{j-1}$.
Note that the reformulation \eqref{eq:refL2} will help to obtain simpler threshold conditions in later analysis. 
We can figure out the explicit expression of $a_j^{(k)}$, $\tilde c_j^{(k)}$,  in \eqref{eq:akj} and \eqref{eq:tilde},  for 
$1\leq j\leq k-1$:
\begin{equation}\label{eq:akjexplicitit}
\begin{aligned}
   a_j^{(k)}  =-\frac{ (2-\alpha)\tau_{j+1}I_1+2 I_2}{(2-\alpha)(1-\alpha)\tau_j(\tau_j+\tau_{j+1})},\quad 
    \tilde c_j^{(k)}  =
   \frac{I_1}{(1-\alpha)\tau_{j+1}}
\end{aligned}
\end{equation}
with 
\begin{equation}\label{eq:stdI}
    \begin{aligned}
        I_1 =  (t_k - t_{j-1})^{1-\alpha} - (t_k - t_j )^{1-\alpha},\quad
        I_2 =  (2-\alpha) \tau_j (t_k - t_{j-1})^{1-\alpha} + (t_k - t_j)^{2 -\alpha} - (t_k - t_{j-1})^{2-\alpha},
         \end{aligned}
\end{equation}
and \begin{equation*}\label{eq:a_k1}
\begin{aligned}
    a_k^{(k)}
    =\frac{\alpha\tau_k^2}{(2-\alpha)(1-\alpha)\tau_{k-1}(\tau_{k-1}+\tau_{k}) \tau_k^\alpha},\quad c_k^{(k)}
    =\frac{1}{(1-\alpha)\tau_k^\alpha}+\frac{\alpha\tau_k}{(2-\alpha)(1-\alpha)(\tau_{k-1}+\tau_{k}) \tau_k^\alpha}.
    \end{aligned}
\end{equation*}
It is not difficult to verify that $I_1>0$ and $I_2>0$.

\subsection{Reformulation of fast L2 formula} 
We consider the fast L2 formula  obtained by applying the sum-of-exponentials approximation \cite{jiang2017fast,liao2019unconditional} to the historical part of the standard L2 formula. 
Specifically speaking, the fast L2 formula reads
\begin{align*}\label{eq:soel2}
F_1^\alpha u= \frac{u^1-u^0}{\Gamma(2-\alpha)\tau_1^{\alpha}},\quad
    F_k^\alpha u
     = \frac{1}{\Gamma(1-\alpha)}\left( 
  \sum_{\ell=1}^{N_{q}}\varpi^{\ell} H^\ell(t_{k})
    -a^{(k)}_k \delta_{k-1}u + c^{(k)}_k\delta_k  u \right), \quad k\geq 2,
\end{align*}   
where  $a^{(k)}_k,\ c^{(k)}_k$ are given in \eqref{akkckk}, and 
\begin{equation}\label{eq:recH}
\begin{aligned}
     H^\ell(t_{k})=e^{-\theta^{\ell}\tau_k}H^\ell(t_{k-1})+ {a}^{k,\ell}_{k-1}u^{k-2}+ {b}^{k,\ell}_{k-1}u^{k-1}+ {c}^{k,\ell}_{k-1}u^{k},
\end{aligned}
\end{equation}
where $\theta^{\ell}$ and $\varpi^{\ell}$ are positive quadrature nodes and weights in the SOE approximation, $N_q$ is the number of  quadrature nodes, $H^\ell(t_{1})=0 $, $ {b}^{k,\ell}_{k-1}= - {a}^{k,\ell}_{k-1} - {c}^{k,\ell}_{k-1} $ and
\begin{align}\label{eq:fastakj}
    {a}^{k,\ell}_{k-1} =  \int_{t_{k-2}}^{t_{k-1}} \frac{2s -t_{k-1}-t_{k}}{\tau_{k-1}(\tau_{k-1}+\tau_{k})} e^{-\theta^{\ell}(t_k-s)}\,\ds ,\quad
   {c}^{k,\ell}_{k-1} =  \int_{t_{k-2}}^{t_{k-1}} \frac{2s -t_{k-2}-t_{k-1}}{\tau_{k}(\tau_{k-1}+\tau_{k})}e^{-\theta^{\ell}(t_k-s)}\,\ds .
\end{align}
Note that
\begin{equation}\label{eq:fastckj}
   {c}^{k,\ell}_{k-1}  =\frac{\tau_{k-1}}{\tau_k}{a}^{k,\ell}_{k-1}+ \tilde{c}^{k,\ell}_{k-1}\quad \text{with}\quad  \tilde{c}^{k,\ell}_{k-1} \coloneqq\int_{t_{k-2}}^{t_{k-1}}\frac{e^{-\theta^{\ell}(t_k-s)}}{\tau_k}\,\ds.
\end{equation}
We can reformulate \eqref{eq:recH} as 
\begin{equation}\label{eq:comHk}
       H^\ell(t_{k})=e^{-\theta^{\ell}\tau_k}H^\ell(t_{k-1})+ {a}^{k,\ell}_{k-1}\left(\tau_{k-1}/\tau_k\delta_k u-\delta_{k-1}u\right)+ \tilde{c}^{k,\ell}_{k-1}\delta_k u.
\end{equation}
We can figure out the explicit expression of   $ {a}^{k,\ell}_{k-1}$, $  \tilde{c}^{k,\ell}_{k-1}$  in  \eqref{eq:fastakj} and \eqref{eq:fastckj}:
\begin{equation}\label{eq:soeexplic}
\begin{aligned}
   {a}^{k,\ell}_{k-1}=-\frac{e^{-\theta^{\ell}\tau_k}(\theta^{\ell}\tau_kJ_1+2J_2)}{\tau_{k-1}(\tau_{k-1}+\tau_{k})(\theta^{\ell})^2},\quad
   \tilde{c}^{k,\ell}_{k-1}=\frac{e^{-\theta^{\ell}\tau_k}J_1}{\theta^{\ell}\tau_{k}}
\end{aligned}
\end{equation}
with 
\begin{equation}\label{eq:soeJ}
    \begin{aligned}
     J_1=1-e^{-\theta^{\ell}\tau_{k-1}},\quad
     J_2=1-\theta^{\ell}\tau_{k-1}e^{-\theta^{\ell}\tau_{k-1}}-e^{-\theta^{\ell}\tau_{k-1}}.
    \end{aligned}
\end{equation}
It is not difficult to verify that $J_1>0$ and $J_2>0$.
\subsection{Catastrophic cancellations in $I_1,~I_2,~J_1$ and $J_2$}

We introduce a bit about the catastrophic cancellation for readers. 
Consider the subtraction of two numbers $x$ and $y$.
Assume that the approximations (probably caused by rounding in floating point arithmetic) of $x$ and $y$ are
\begin{equation*} 
\tilde {x}=x(1+\delta _{x}),\qquad \tilde {y}=y(1+\delta _{y}),
\end{equation*}
where $|\delta _{x}|=|x-{\tilde {x}}|/|x|$ and $|\delta _{y}|=|y-{\tilde {y}}|/|y|$ are relative errors. 
Then the relative error of the approximate difference $\tilde {x}-\tilde {y}$ from the true difference $x-y$ is inversely proportional to the true difference:
\begin{equation*}
    \begin{aligned}
{\tilde {x}}-{\tilde {y}}=x(1+\delta _{x})-y(1+\delta _{y})=(x-y)\left(1+{\frac {x\delta _{x}-y\delta _{y}}{x-y}}\right).
\end{aligned}
        \end{equation*}
Thus, the relative error of $ {\tilde {x}}-{\tilde {y}}$ and $x-y$ is
\begin{equation}\label{eq:relerr}
     \left|\frac {(\tilde x-\tilde y) - (x-y)}{x-y}\right| = \left|\frac {x\delta _{x}-y\delta _{y}}{x-y}\right|,
\end{equation}
which can be arbitrarily large if the true inputs $x$ and $y$ are close.

Catastrophic cancellation happens because subtraction is ill-conditioned at nearby inputs.
One can find that $I_1$ and $I_2$ in \eqref{eq:akjexplicitit} can come across the catastrophic cancellation when $\tau_j\ll t_k-t_{j-1}$, while  $J_1$ and  $J_2$ in \eqref{eq:soeexplic} can come across it when  $\theta^{\ell}\tau_{k-1}\ll 1$.
To avoid the catastrophic cancellation, the adaptive Gauss–Kronrod quadrature can be used to approximate the integral formula of standard and fast L2-1$_\sigma$ coefficients (similar to the L2 coefficients), rather than compute the explicit formula directly \cite{liao2018second,chen2019error,quan2022stability,quanwuyang2022h}. 
The integration quadrature is to compute all coefficients which will certainly increase the computation cost.
However, this is unnecessary in our opinion because the catastrophic cancellation does not usually happen.  

Next, we provide a theory on when the catastrophic cancellation might happen.
If the catastrophic cancellation does not happen, there is no doubt that computing the explicit expressions \eqref{eq:akjexplicitit} and \eqref{eq:soeexplic} directly is the most efficient way. 
If the catastrophic cancellation might happen, we propose to use Taylor-expansion approximation to do the calculation.

\section{Threshold conditions and Taylor-expansion approximation}\label{sect3}
In this part, we shall propose a new method to deal with the roundoff error problems in standard and fast L2 methods.


In fact, $I_1$ and $I_2$ of \eqref{eq:stdI} can be rewritten as 
\begin{equation}\label{eq:I12}
\begin{aligned}
I_1  =(t_k - t_{j-1})^{1-\alpha} \left[1-(1-\theta)^{1-\alpha} \right], \quad
I_2  =  (t_k - t_{j-1})^{2-\alpha} \left[(2-\alpha) \theta +  (1 - \theta)^{2-\alpha} - 1\right],
\end{aligned}
\end{equation}
where 
\begin{equation}\label{eq:theta}
    0<\theta = \frac{\tau_j}{t_k - t_{j-1}}<1,\quad 1\leq j\leq k-1.
\end{equation}
In \eqref{eq:I12}, it is clear that the catastrophic cancellations happen only when $\theta$ is very small.
Using the Taylor's expansions of $(1-\theta)^{1-\alpha}$ and $(1-\theta)^{2-\alpha}$, 
we have
\begin{equation}\label{eq:taylorIo}
\begin{aligned}
I_1 =- (t_k - t_{j-1})^{1-\alpha} \sum_{m = 1}^\infty \binom{1-\alpha}{m} (-\theta)^m, \quad 
I_2  = (t_k - t_{j-1})^{2-\alpha} \sum_{m = 2}^\infty \binom{2-\alpha}{m} (-\theta)^m.
\end{aligned}
\end{equation}
It is clear that
$
    \binom{1-\alpha}{m} (-\theta)^m
$
is always negative for any $m\geq 1$, and
$
    \binom{2-\alpha}{m} (-\theta)^m
$
is always positive for any $m\geq 2$. 
As a consequence, $I_1>0$, $I_2>0$, and the catastrophic cancellation will not happen in the Taylor expansion \eqref{eq:taylorIo}.
This indicates that $I_1$ and $I_2$ can be approximated by the following truncated Taylor's expansions (to avoid the catastrophic cancellation):
\begin{equation}\label{eq:taylorI}
    \begin{aligned}
 \hat I_1 = -(t_k - t_{j-1})^{1-\alpha} \sum_{m = 1}^{M_1} \binom{1-\alpha}{m} (-\theta)^m, \quad
 \hat I_2 =  (t_k - t_{j-1})^{2-\alpha} \sum_{m = 2}^{M_2+1} \binom{2-\alpha}{m} (-\theta)^m,
    \end{aligned}
\end{equation}
where $M_1\geq 1$ and $M_2\geq 1$ are the truncation numbers respectively for $I_1$ and $I_2$.


Similarly, applying the Taylor's expansions of $e^{\theta^{\ell}\tau_{k-1}}$  to $J_1$ and $J_2$ in \eqref{eq:soeexplic}, we have 
\begin{equation}\label{eq:taylorJo}
    \begin{aligned}
     J_1=e^{-\theta^{\ell}\tau_{k-1}}\sum_{m=1}^\infty\frac{(\theta^{\ell}\tau_{k-1})^m}{m!},\quad 
     J_2=e^{-\theta^{\ell}\tau_{k-1}}\sum_{m=2}^\infty\frac{(\theta^{\ell}\tau_{k-1})^m}{m!}.
    \end{aligned}
\end{equation}
Then we have $J_1>0$, $J_2>0$, and the catastrophic cancellation will not happen in the Taylor expansions \eqref{eq:taylorJo}.
Therefore, $J_1$ and $J_2$ can be approximated by the following truncated Taylor's expansions (to avoid the catastrophic cancellation  when $\theta^{\ell}\tau_{k-1}$ is very small):
\begin{equation}\label{eq:taylorJ}
\begin{aligned}
    \hat  J_1&=e^{-\theta^{\ell}\tau_{k-1}}\sum_{m=1}^{N_1}\frac{(\theta^{\ell}\tau_{k-1})^m}{m!},\quad
    \hat J_2 =e^{-\theta^{\ell}\tau_{k-1}}\sum_{m=2}^{N_2+1}\frac{(\theta^{\ell}\tau_{k-1})^m}{m!},
      \end{aligned}
\end{equation}
where $N_1\geq 1$ and $N_2\geq 1$ are the truncation numbers respectively for $J_1$ and $J_2$. 


We have shown that the truncated Taylor expansions \eqref{eq:taylorI} and \eqref{eq:taylorJ} can be used to avoid the catastrophic cancellation for the computation of $I_i$ and $J_i$, $i=1,~2$.
However, in most cases, the catastrophic cancellation won't happen and direct computation can already provide accurate results. 
To quantify the catastrophic cancellation phenomenon, we first introduce a concept of $\delta$-cancellation.
\begin{definition}[$\delta$-cancellation] 
       Given any two numbers $x$ and $y$, and their approximations $\tilde x$ and $\tilde y$, if the relative error  
   \begin{equation*}
    \left|\frac {(\tilde x-\tilde y) - (x-y)}{x-y}\right| \ge \delta,
\end{equation*}
then we call the subtraction $\tilde x-\tilde y$ a $\delta$-cancellation.
\end{definition}

According to \eqref{eq:relerr}, it is clear that if
\begin{equation*}\label{cancellation}
 \frac {|x\delta _{x}|+|y\delta _{y}|}{|x-y|}< \delta,
\end{equation*} 
the $\delta$-cancellation will never happen.
Next we state and prove the following theorem on the threshold conditions when $\delta$-cancellation won't happen.  
\begin{theorem}\label{lemma1}
    Given the fractional order $\alpha\in (0,1)$, the machine relative error $\delta_0$, and the parameter $ \delta >12\delta_0$ in $\delta$-cancellation, let
    \begin{equation*}\label{eq:lemma}
    \begin{aligned}
           \theta_{s,1}=\frac{2\delta_0}{(1-\alpha)\delta},\ \theta_{s,2}=\left(\frac{6\delta_0}{(1-\alpha)\delta}\right)^{\frac12},
              \ \theta_{f,1}&=\frac{4\delta_0}{\delta},\ \theta_{f,2}=\left(\frac{12\delta_0}{\delta}\right)^\frac12.
    \end{aligned}
    \end{equation*}
 If the threshold condition
 \begin{equation}\label{eq:tc1}
 \tau_j/(t_k - t_{j-1}) \ge\theta_{s,i}
 \end{equation}
 holds, then the subtraction in $I_i$ in \eqref{eq:stdI} is not $\delta$-cancellation for $i=1,~2$. 
 Similarly, if the threshold condition
 \begin{equation}\label{eq:tc2}
 \theta^{\ell}\tau_{k-1} \ge\theta_{f,i}
 \end{equation}
 holds,  then the subtraction in $J_i$ in  \eqref{eq:soeJ} is not $\delta$-cancellation for $i=1,~2$. 
\end{theorem}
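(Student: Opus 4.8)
The plan is to bound the quantity in \eqref{eq:relerr}, namely $\bigl(|x\delta_x| + |y\delta_y|\bigr)/|x-y|$, for each of the four subtractions by exploiting the Taylor-series representations \eqref{eq:taylorIo} and \eqref{eq:taylorJo}, which already show that the numerators and denominators are sign-definite. First I would set up the accounting: in the worst case the relative errors $|\delta_x|,|\delta_y|$ incurred in forming the two operands of each subtraction are each bounded by $c\,\delta_0$ for a small integer constant $c$ (accounting for the rounding in the powers or exponentials), so the crucial estimate is an upper bound on $\bigl(|x| + |y|\bigr)/|x-y|$. For $I_1$ we have $x = (t_k-t_{j-1})^{1-\alpha}$, $y = (t_k-t_j)^{1-\alpha}$, so $|x|+|y| \le 2(t_k-t_{j-1})^{1-\alpha}$, while by \eqref{eq:taylorIo} the difference $x - y = I_1$ satisfies $I_1 \ge (t_k-t_{j-1})^{1-\alpha}(1-\alpha)\theta$, keeping only the $m=1$ term (the series being alternating in a way that makes every term of one sign after factoring out the leading power — the relevant point is that the first term dominates from below). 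Hence $\bigl(|x|+|y|\bigr)/|x-y| \le 2/\bigl((1-\alpha)\theta\bigr)$, and the $\delta$-cancellation is avoided once $2c\,\delta_0 / \bigl((1-\alpha)\theta\bigr) < \delta$, i.e. once $\theta \ge \theta_{s,1} = 2\delta_0/\bigl((1-\alpha)\delta\bigr)$ with $c$ absorbed by the slack $\delta > 12\delta_0$.

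Next I would run the same argument for $I_2$. Here the subtraction is between $x = (t_k-t_j)^{2-\alpha} + (2-\alpha)\tau_j(t_k-t_{j-1})^{1-\alpha}$ and $y = (t_k-t_{j-1})^{2-\alpha}$ (or whichever grouping makes the cancellation explicit), both of magnitude $O\bigl((t_k-t_{j-1})^{2-\alpha}\bigr)$, so $|x|+|y| \lesssim C(t_k-t_{j-1})^{2-\alpha}$; meanwhile \eqref{eq:taylorIo} gives $I_2 \ge (t_k-t_{j-1})^{2-\alpha}\binom{2-\alpha}{2}\theta^2$, where $\binom{2-\alpha}{2} = \tfrac{(2-\alpha)(1-\alpha)}{2}$. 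This yields $\bigl(|x|+|y|\bigr)/|x-y| \lesssim C/\bigl((1-\alpha)\theta^2\bigr)$ (the $(2-\alpha)$ factors being $O(1)$), and the threshold $\theta \ge \theta_{s,2} = \bigl(6\delta_0/((1-\alpha)\delta)\bigr)^{1/2}$ follows, the exponent $1/2$ coming directly from the leading term being quadratic in $\theta$ and the constant $6$ from $2/\binom{2-\alpha}{2}$ bounded appropriately together with the rounding constant $c$. For $J_1$ and $J_2$ the computation is structurally identical but cleaner: with $z = \theta^\ell\tau_{k-1}$, the operands of $J_1 = 1 - e^{-z}$ have magnitude $\le 1$ so $|x|+|y| \le 2$, while \eqref{eq:taylorJo} gives $J_1 = e^{-z}\sum_{m\ge1} z^m/m! \ge e^{-z} z$; but since we also need a lower bound not involving $e^{-z}$ (which is $\le 1$), one uses $J_1 = 1 - e^{-z} \ge z/2$ for $z$ small (valid say for $z\le 1$, and for $z\ge1$ cancellation is a non-issue), giving $\bigl(|x|+|y|\bigr)/|x-y| \le 4/z$ and hence $\theta_{f,1} = 4\delta_0/\delta$. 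For $J_2$, similarly $J_2 = 1 - (1+z)e^{-z} \ge z^2/C$ for $z$ small, producing the square-root threshold $\theta_{f,2} = (12\delta_0/\delta)^{1/2}$.

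The main obstacle I anticipate is not any single estimate but getting the constants exactly right: one must (i) pin down precisely how many $\delta_0$'s of relative error each operand carries (a power $(t_k-s)^{1-\alpha}$ computed in floating point, a sum of two such terms, an exponential $e^{-z}$ — each contributes a small multiplicative factor, and the factor $12$ in the hypothesis $\delta > 12\delta_0$ suggests the worst case costs up to roughly $6\delta_0$ per operand or $12\delta_0$ in the numerator $|x\delta_x|+|y\delta_y|$), and (ii) replace the crude "keep the leading Taylor term" lower bound by a clean elementary inequality valid on the whole relevant range of $\theta$ (resp. $z$), for instance $1-(1-\theta)^{1-\alpha} \ge (1-\alpha)\theta$ and $(2-\alpha)\theta + (1-\theta)^{2-\alpha} - 1 \ge \tfrac{(2-\alpha)(1-\alpha)}{2}\theta^2$, each of which can be checked by differentiating or by the alternating/sign-definite structure of \eqref{eq:taylorIo}. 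Once those two ingredients are fixed, each of the four threshold conditions \eqref{eq:tc1}–\eqref{eq:tc2} drops out by isolating $\theta$ (resp. $\theta^\ell\tau_{k-1}$) in the inequality $\bigl(|x|+|y|\bigr)c\delta_0/|x-y| < \delta$, and the proof is complete.
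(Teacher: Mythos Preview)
Your plan matches the paper's proof: for each of the four subtractions, bound $(|x|+|y|)\delta_0/|x-y|$ using an elementary lower bound on $|x-y|$ coming from the leading Taylor term, then solve for the threshold. The paper normalizes by pulling out the common prefactor (so for $I_1$ it takes $x_1=1$, $y_1=(1-\theta)^{1-\alpha}$), but this is cosmetic.

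Your accounting of the constants is off in two places, though. First, the paper's model takes each operand's relative error to be at most $\delta_0$, full stop --- there is no extra factor $c>1$ to track, and no ``slack'' is invoked to absorb one. Second, and relatedly, the hypothesis $\delta>12\delta_0$ is \emph{not} there to absorb operand-rounding constants; its sole purpose is to force $\theta_{f,1}=4\delta_0/\delta<1$ and $\theta_{f,2}=(12\delta_0/\delta)^{1/2}<1$, so that the elementary inequalities $1-e^{-z}>z/2$ and $1-(1+z)e^{-z}>z^2/6$, which hold only for $z\in(0,1)$, are applicable at $z=\theta_{f,i}$. The specific constants $2,\,6,\,4,\,12$ in the thresholds fall out once you bound $|x|+|y|$ by $2,\,3,\,2,\,2$ respectively (for $I_2$, observe that $(2-\alpha)\theta+(1-\theta)^{2-\alpha}$ is increasing in $\theta$ with value $2-\alpha<2$ at $\theta=1$, so $|x_2|+|y_2|<3$) and $|x-y|$ from below by $(1-\alpha)\theta$, $(1-\alpha)\theta^2/2$, $z/2$, $z^2/6$. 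With those two fixes your outline becomes exactly the paper's argument.
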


\begin{proof}
 {\em $I_1$ case}: 
 Let $x_1=1$ and $y_1=(1-\theta)^{1-\alpha}$ be the two terms in $I_1$ where $\theta = \tau_j/(t_k - t_{j-1})<1$.
Since the relative errors $\delta _{x_1}$ and $\delta _{y_1}$ of float-point numbers $\tilde x_1$ and $\tilde y_1$ stored in machine are less than $\delta_0$, we have
\begin{equation*}
 \frac {|x_1\delta _{x_1}|+|y_1\delta _{y_1}|}{|x_1-y_1|}\le \frac {|x_1|+|y_1|}{|x_1-y_1|} \delta_0< \frac{\delta_0\left(1+(1-\theta)^{1-\alpha}\right)}{(1-\alpha)\theta}< \frac{2\delta_0}{(1-\alpha)\theta_{s,1}}=\delta,\quad \mbox{when }\theta\ge \theta_{s,1}.
\end{equation*}

{\em $I_2$ case: }
 Let $x_2=(2-\alpha) \theta +  (1 - \theta)^{2-\alpha}$ and $y_2=1$ be the two terms in $I_2$.
Since the relative errors $\delta _{x_2}$ and $\delta _{y_2}$ of float-point numbers $\tilde x_2$ and $\tilde y_2$ stored in machine are less than $\delta_0$, we have
\begin{equation*}
\begin{aligned}
  \frac {|x_2\delta _{x_2}|+|y_1\delta _{y_2}|}{|x_2-y_2|}\le \frac {|x_2|+|y_2|}{|x_2-y_2|} \delta_0 &\le2\frac {(2-\alpha) \theta +  (1 - \theta)^{2-\alpha}+1}{(1-\alpha)\theta^2} \delta_0 <
 \frac{6\delta_0}{(1-\alpha)\theta_{s,2}^2}=\delta,\quad \mbox{when }  \theta\ge \theta_{s,2}.
\end{aligned}
\end{equation*}
Here we use the fact that $(2-\alpha) \theta +  (1 - \theta)^{2-\alpha}$ increases w.r.t. $\theta\in (0,1)$.

 {\em $J_1$ case:} Let $x_3=1$ and $y_3=e^{-\theta^{\ell}\tau_{k-1}}$ be the two terms in $J_1$. 
 Since the relative errors $\delta _{x_3}$ and $\delta _{y_3}$ of float-point numbers $\tilde x_3$ and $\tilde y_3$ stored in machine are less than $\delta_0$, we have
 \begin{equation*}
  \frac {|x_3\delta _{x_3}|+|y_3\delta _{y_3}|}{|x_3-y_3|}\le \frac {|x_3|+|y_3|}{|x_3-y_3|} \delta_0< \frac{2\delta_0}{1-e^{-\theta_{f,1}}}< \frac{4\delta_0}{\theta_{f,1}}=\delta,\quad \mbox{when } \theta^{\ell}\tau_{k-1}\ge\theta_{f,1}.
\end{equation*}
Here we use the facts that $\theta_{f,1}<1$ and $1-e^{-\theta}>\theta/2$ for $\theta\in (0,1)$.


{\em $J_2$ case:} 
 Let $x_4=1$ and $y_4=\theta^{\ell}\tau_{k-1}e^{-\theta^{\ell}\tau_{k-1}}+e^{-\theta^{\ell}\tau_{k-1}}$ be the two terms in $J_2$.
Since the relative errors $\delta _{x_4}$ and $\delta _{y_4}$ of float-point numbers $\tilde x_4$ and $\tilde y_4$ stored in machine are less than $\delta_0$, we have
    \begin{equation*}
  \frac {|x_4\delta _{x_4}|+|y_4\delta _{y_4}|}{|x_4-y_4|}\le \frac {|x_4|+|y_4|}{|x_4-y_4|} \delta_0< \frac{2\delta_0}{1-e^{-\theta_{f,2}}-\theta_{f,2}e^{-\theta_{f,2}}} <\frac{12\delta_0}{\theta_{f,2}^2}=\delta,\quad \mbox{when } \theta^{\ell}\tau_{k-1}\ge\theta_{f,2}.
\end{equation*}
Here we use the facts that $\theta_{f,2}<1$ and $1-e^{-\theta}-\theta e^{-\theta}>\theta^2/6$ for $\theta\in (0,1)$.
\end{proof}

If the threshold condition \eqref{eq:tc1} or \eqref{eq:tc2} is not satisfied, the truncated Taylor expansion formulas, i.e. $\hat I_i$ and $\hat J_i$, can approximate $I_i$ and $J_i$ properly. 
\begin{theorem}\label{lemma2}
If $\tau_j/(t_k - t_{j-1}) \le \theta_{s,i}$ and  $\theta^{\ell}\tau_{k-1} \le \theta_{f,i}$ for $i=1,~2$, the  relative error $\delta I_i$ of $I_i$ in \eqref{eq:stdI} and $\hat I_i$ in \eqref{eq:taylorI}, and the relative error $\delta J_i$ of  $J_i$ in \eqref{eq:soeJ} and $\hat J_i$ in \eqref{eq:taylorJ} satisfy
  \begin{equation*}
      \begin{aligned}
     |\delta I_i| < \theta^{M_i},\quad |\delta J_i| <(\theta^{\ell}\tau_{k-1})^{N_i}
   ,\qquad i = 1,~2,
 \end{aligned}
 \end{equation*}
 where $\theta = \tau_j/(t_k - t_{j-1})$.
\end{theorem}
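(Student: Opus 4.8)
The plan is to reduce all four estimates to one elementary observation: after cancelling the common positive prefactors, each relative error is the ratio of a tail of a series with positive, \emph{decreasing} coefficients to the whole series, and such a truncation loses at most a factor equal to the truncation variable raised to the number of retained terms.

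I would begin with $I_1$. By \eqref{eq:taylorIo} and \eqref{eq:taylorI}, with $\theta=\tau_j/(t_k-t_{j-1})\in(0,1)$ as in \eqref{eq:theta}, we may write $I_1=(t_k-t_{j-1})^{1-\alpha}\sum_{m\ge 1}c_m\theta^m$ and $\hat I_1=(t_k-t_{j-1})^{1-\alpha}\sum_{m=1}^{M_1}c_m\theta^m$, where $c_m:=-\binom{1-\alpha}{m}(-1)^m=\bigl|\binom{1-\alpha}{m}\bigr|>0$ for all $m\ge 1$ (the sign of each term, hence positivity of the $c_m$, and convergence for $\theta<1$, were already recorded above). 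Hence the prefactor cancels and
\[
|\delta I_1|=\frac{\sum_{m=M_1+1}^{\infty}c_m\theta^m}{\sum_{m=1}^{\infty}c_m\theta^m}.
\]
To bound this by $\theta^{M_1}$ it suffices to show $\sum_{m>M_1}c_m\theta^m<\theta^{M_1}\sum_{m\ge 1}c_m\theta^m$. I rewrite the right-hand side by the substitution $n=m+M_1$ as $\sum_{n>M_1}c_{n-M_1}\theta^n$ and compare it with the left-hand side term by term: the inequality follows once $c_n<c_{n-M_1}$ for every $n>M_1$, and since $n-M_1\ge 1$ this is immediate from the monotonicity of $(c_m)_{m\ge 1}$. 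That monotonicity is a one-line ratio computation, $c_{m+1}/c_m=(m-1+\alpha)/(m+1)<1$ because $\alpha<2$.

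The cases $I_2$, $J_1$, $J_2$ are structurally identical; only the coefficient sequence and the starting index differ. For $I_2$ the coefficients are $d_m:=\bigl|\binom{2-\alpha}{m}\bigr|>0$ for $m\ge 2$, with $d_{m+1}/d_m=(m-2+\alpha)/(m+1)<1$ since $\alpha<3$; the truncation $\hat I_2$ keeps exactly $M_2$ terms ($m=2,\dots,M_2+1$), so the index shift in the reindexing is $M_2$ and the comparison requires $d_n<d_{n-M_2}$ for $n\ge M_2+2$, which holds because $n-M_2\ge 2$. For $J_1$ and $J_2$ one sets $x=\theta^\ell\tau_{k-1}>0$, cancels the common factor $e^{-x}$ using \eqref{eq:taylorJo}--\eqref{eq:taylorJ}, and works with the coefficients $1/m!$ (starting at $m=1$, respectively $m=2$), whose monotonicity $1/(m+1)!<1/m!$ is trivial; the same reindexing then yields $|\delta J_i|<x^{N_i}$.

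The only points requiring attention are bookkeeping ones: matching the exponent claimed in the bound to the number of terms actually retained by each truncation (hence to the size of the index shift), confirming that every retained and every omitted term is genuinely positive, noting that the binomial series for $I_1,I_2$ converge precisely because $\theta\in(0,1)$ while the exponential series for $J_1,J_2$ converge for all $x>0$, and observing that the final inequalities are strict because each coefficient sequence is strictly decreasing. The threshold hypotheses $\tau_j/(t_k-t_{j-1})\le\theta_{s,i}$ and $\theta^\ell\tau_{k-1}\le\theta_{f,i}$ are in fact not used in the estimate — they merely delimit the regime in which one switches to the Taylor approximation. I do not expect a real obstacle beyond recognizing this reindexing trick: the naive geometric bound on the tail, roughly $\sum_{m>M}c_m\theta^m\lesssim c_{M+1}\theta^{M+1}/(1-\theta)$, overshoots $\theta^{M}$ and must be replaced by the sharper term-by-term comparison.
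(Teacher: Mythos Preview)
Your argument is correct and is essentially identical to the paper's: both factor out $\theta^{M_i}$ (respectively $(\theta^\ell\tau_{k-1})^{N_i}$) from the tail by reindexing, then compare term by term using the monotone decrease of $\bigl|\binom{1-\alpha}{m}\bigr|$, $\bigl|\binom{2-\alpha}{m}\bigr|$, and $1/m!$. Your observation that the threshold hypotheses play no role in the estimate is accurate, and your explicit verification of the coefficient ratios is a helpful addition the paper leaves implicit.
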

\begin{proof} From \eqref{eq:taylorIo}--\eqref{eq:taylorJ}, we have 
{\small
 \begin{equation*}
 \begin{aligned}
     | I_1-\hat I_1| &=\bigg |- (t_k - t_{j-1})^{1-\alpha} \sum_{m = M_1+1}^\infty \binom{1-\alpha}{m} (-\theta)^m\bigg| =\theta^{M_1}\bigg |- (t_k - t_{j-1})^{1-\alpha} \sum_{m = 1}^\infty \binom{1-\alpha}{m+M_1} (-\theta)^m\bigg|\\
      &<\theta^{M_1}\bigg |- (t_k - t_{j-1})^{1-\alpha} \sum_{m = 1}^\infty \binom{1-\alpha}{m} (-\theta)^m\bigg|= \theta^{M_1} I_1,\\
 | I_2-\hat I_2| &=\bigg |(t_k - t_{j-1})^{2-\alpha} \sum_{m = M_2+2}^\infty \binom{2-\alpha}{m} (-\theta)^m\bigg | = \theta^{M_2} \bigg | (t_k - t_{j-1})^{2-\alpha} \sum_{m = 2}^\infty \binom{2-\alpha}{m+M_2} (-\theta)^m\bigg | \\
&<\theta^{M_2} \bigg | (t_k - t_{j-1})^{2-\alpha} \sum_{m = 2}^\infty \binom{2-\alpha}{m} (-\theta)^m\bigg | = \theta^{M_2} I_2,\\
   | J_1-\hat J_1|& = e^{-\theta^{\ell}\tau_{k-1}}\sum_{m=N_1+1}^\infty\frac{(\theta^{\ell}\tau_{k-1})^m}{m!} = (\theta^{\ell}\tau_{k-1})^{N_1} e^{-\theta^{\ell}\tau_{k-1}}\sum_{m=1}^\infty\frac{(\theta^{\ell}\tau_{k-1})^m}{(m+N_1)!}\\&< (\theta^{\ell}\tau_{k-1})^{N_1}e^{-\theta^{\ell}\tau_{k-1}}\sum_{m=1}^\infty\frac{(\theta^{\ell}\tau_{k-1})^m}{m!} = (\theta^{\ell}\tau_{k-1})^{N_1} J_1,\\
    | J_2-\hat J_2| &= e^{-\theta^{\ell}\tau_{k-1}}\sum_{m=N_2+2}^\infty\frac{(\theta^{\ell}\tau_{k-1})^m}{m!} = (\theta^{\ell}\tau_{k-1})^{N_2}e^{-\theta^{\ell}\tau_{k-1}}\sum_{m=2}^\infty\frac{(\theta^{\ell}\tau_{k-1})^m}{(m+N_2)!}\\& < (\theta^{\ell}\tau_{k-1})^{N_2} e^{-\theta^{\ell}\tau_{k-1}}\sum_{m=2}^\infty\frac{(\theta^{\ell}\tau_{k-1})^m}{m!} = (\theta^{\ell}\tau_{k-1})^{N_2}J_2.
 \end{aligned}
 \end{equation*}}
 Then the  relative errors satisfy
 \begin{equation*}
      \begin{aligned}
     | \delta I_i| &<\theta^{M_i},\quad| \delta J_i| 
    <(\theta^{\ell}\tau_{k-1})^{N_i}, \qquad  i=1,~2. 
 \end{aligned}
 \end{equation*}
\end{proof}

We can select 
\begin{equation*}
    M_i=\lceil \log_\theta \delta_0 \rceil,\quad N_i=\lceil \log_{\theta^{\ell}\tau_{k-1}} \delta_0 \rceil,
\end{equation*}  
to ensure $\theta^{M_i}\le\delta_0$ and $(\theta^{\ell}\tau_{k-1})^{N_i}\le \delta_0$. 
According to Theorem \ref{lemma2},  $\hat I_i$ has the same floating-point number with $I_i$ in machine, while $\hat J_i$ has another same floating-point number with $J_i$.

Combining Theorem~\ref{lemma1}--\ref{lemma2}, we conclude that if the threshold conditions \eqref{eq:tc1} and \eqref{eq:tc2} are satisfied, $\delta$-cancellation won't happen in $I_i$ and $J_i$; if \eqref{eq:tc1} and \eqref{eq:tc2} are not satisfied, the Taylor expansions $\hat I_i$ and $\hat J_i$ can be applied to compute $I_i$ and $J_i$ (usually with only several terms).
We call this method the threshold conditions plus Taylor expansions (TCTE) method in this work.
At the end of this part, we illustrate our TCTE methods in Algorithms~\ref{alg:akjckj}--\ref{alg:fastack} of computing  standard L2 coefficients $a_j^{(k)}$, $\tilde c_j^{(k)}$, for $1\le j\le k-1$, and fast L2 coefficients  ${a}^{k,\ell}_{k-1}$, $\tilde{c}^{k,\ell}_{k-1}$, in \eqref{eq:akjexplicitit} and \eqref{eq:soeexplic}. 
  \begin{remark}
     For the  standard and fast L2-1$_\sigma$ methods, the correspond quantities $I_i$ and $J_i$ are computed by  replacing $t_k$ with $t_k-\alpha/2\tau_k$ and {\red $e^{-\theta^{\ell}\tau_k}$ with $e^{-\theta^{\ell}(1-\alpha/2)\tau_k}$, } in Algorithms~\ref{alg:akjckj}--\ref{alg:fastack}.
 \end{remark}

 \begin{algorithm}
  \setstretch{0.05}
 \caption{Compute $a_j^{(k)}$, $\tilde c_j^{(k)}$ in equation \eqref{eq:akjexplicitit} for L2 method.}
	\label{alg:akjckj}
 \raggedright
	Input the parameters $\alpha$, $\theta_{s,1}$, $\theta_{s,2}$,   $\tau_j$, $\tau_{j+1}$,  $t_j$  and  $t_k$.
	
   Compute
	          \begin{equation*}
		        \theta=\frac{\tau_j}{t_k-t_j+\tau_j}.
		      \end{equation*}
		      
		      	\textbf{If} $  \theta\le\theta_{s,1} $
         
                 \begin{align*}
                  &M_1=\lceil \log_\theta \delta_0 \rceil,\\
              & I_1= -(t_k-t_j+\tau_j)^{1-\alpha} \sum_{m = 1}^{M_1}\binom{1-\alpha}{m} (-\theta)^m,
                  \end{align*}
                  
                  	\textbf{else}
                  	\begin{equation*}
                  	   I_1  =(t_k-t_j+\tau_j)^{1-\alpha} \left[1-(1-\theta)^{1-\alpha} \right].
                  	\end{equation*}
                  	
                 	\textbf{end}
                  		\ 
                    
               \textbf{If} $  \theta\le\theta_{s,2} $
               
             \begin{align*}
                 &M_2=\lceil\log_\theta \delta_0\rceil,\\
                 &I_2= (t_k-t_j+\tau_j)^{2-\alpha} \sum_{m = 2}^{M_2+1} \binom{2-\alpha}{m} (-\theta)^m,
                  \end{align*}
                  
                 \textbf{else}
                  	\begin{equation*}
                  	  I_2 =  (t_k-t_j+\tau_j)^{2-\alpha} \left[(2-\alpha) \theta +  (1 - \theta)^{2-\alpha} - 1\right].
                  	\end{equation*}
                  	
                  \textbf{end}
                
   Compute and output     
\begin{equation*}
           a_j^{(k)}  =-\frac{ (2-\alpha)\tau_{j+1}I_1+2 I_2}{(2-\alpha)(1-\alpha)\tau_j(\tau_j+\tau_{j+1})},\quad 
        \tilde c_j^{(k)}  =
      \frac{I_1}{(1-\alpha)\tau_{j+1}}.
    \end{equation*}
\end{algorithm}
\begin{algorithm}
\setstretch{0.05}
 \caption{Compute ${a}^{k,\ell}_{k-1}$, $\tilde {c}^{k,\ell}_{k-1}$ in equation \eqref{eq:soeexplic} for fast L2 method. }
	\label{alg:fastack}
  \raggedright
	Input the parameters $\theta_{f,1}$, $\theta_{f,2}$,   $\theta^{\ell}$, $\tau_{k-1}$ and $\tau_{k}$.
	
	\textbf{If} $ \theta^{\ell}\tau_{k-1}\le\theta_{f,1} $

                 \begin{align*}
                 &N_1=\lceil \log_{\theta^{\ell}\tau_{k-1}} \delta_0 \rceil,\\
              &J_1=e^{-\theta^{\ell}\tau_{k-1}}\sum_{m=1}^{N_1}\frac{(\theta^{\ell}\tau_{k-1})^m}{m!},
                  \end{align*}
                  
                  	\textbf{else}
                  	\begin{equation*}
                  	    J_1=1-e^{-\theta^{\ell}\tau_{k-1}}.
                  	\end{equation*}
                  	
                 	\textbf{end}
                  		
               \textbf{If} $ \theta^{\ell}\tau_{k-1}\le\theta_{f,2} $

                 \begin{align*}
                 &N_2=\lceil \log_{\theta^{\ell}\tau_{k-1}} \delta_0 \rceil,\\
                   &J_2 =e^{-\theta^{\ell}\tau_{k-1}}\sum_{m=2}^{N_2+1}\frac{(\theta^{\ell}\tau_{k-1})^m}{m!},
                  \end{align*}
                  
                 \textbf{else}
                  	\begin{equation*}
                   J_2=1-\theta^{\ell}\tau_{k-1}e^{-\theta^{\ell}\tau_{k-1}}-e^{-\theta^{\ell}\tau_{k-1}}.
                  	\end{equation*}
                  	
                  \textbf{end}
                  
Compute and output
\begin{equation*}
       {a}^{k,\ell}_{k-1}=-\frac{e^{-\theta^{\ell}\tau_k}(\theta^{\ell}\tau_kJ_1+2J_2)}{\tau_{k-1}(\tau_{k-1}+\tau_{k})(\theta^{\ell})^2},\quad
   \tilde{c}^{k,\ell}_{k-1}=\frac{e^{-\theta^{\ell}\tau_k}J_1}{\theta^{\ell}\tau_{k}}.
\end{equation*}

\end{algorithm}

\section{Numerical experiments}\label{sect4}
In this section, we give some numerical tests to illustrate the accuracy and efficiency of our TCTE method. 
All experiments are implemented on a computer with 3.60GHz, Intel-Core i9-9900K  in Matlab  with 64 bits in double precision, where the machine error is $\delta_0 = 2^{-52}\approx$ 2.22e-16.

In the following content, we set  by default $\theta_{s,1}=\theta_{f,1}=10^{-4}$,  $\theta_{s,2}=\theta_{f,2}=10^{-2}$  in Algorithm~\ref{alg:akjckj}--\ref{alg:fastack}.
Based  on Theorem~\ref{lemma1}--\ref{lemma2}, the relative errors of $I_1,~I_2,~J_1,~J_2$ in the computation of L2 coefficients using the TCTE method satisfy 
 \begin{equation*}
     |\delta I_1|< \frac{4.44\text{e-12}}{1-\alpha},\quad |\delta I_2|<  \frac{1.332\text{e-11}}{1-\alpha},\quad |\delta J_1|<  8.88\text{e-12},\quad |\delta J_2|<  2.664\text{e-11}.
 \end{equation*}

We focus on the following linear subdiffusion equation:
\begin{equation}\label{eq:subdiffusion}
\begin{aligned}
    \partial_t^\alpha u(t,x) & = \Delta u(t,x)+f(t,x),&& (t,x)\in (0,T]\times\Omega,\\
    u(t,x) &= 0,&& (t,x)\in (0,T]\times\partial\Omega,\\
    u(0,x)& = u^0(x),&& x\in \Omega
\end{aligned}    
\end{equation}
by using the standard and fast L2 implicit schemes. 
The graded mesh \cite{stynes2017error} with grading parameter $r\geq 1$ is used
\begin{equation*}\label{eq:tauj}
\begin{aligned}
    t_j  = \left(\frac{j}{N}\right)^r  T, \quad\tau_j  = t_j -t_{j-1} = \left[\left(\frac {j} { N}\right)^r-\left(\frac {j-1}{ N}\right)^r\right]  T,
\end{aligned}
\end{equation*}
where $N$ is the number of time steps. 

\subsection{Accuracy of TCTE method} 
\begin{example}\label{exm1}
Consider the subdiffusion equation
  \eqref{eq:subdiffusion}  in $\Omega=[-1,1]^2$ with $T=1$ and
$f(t,x,y)=(\Gamma(1+\alpha)+2\pi^2t^\alpha)\sin(\pi x)\sin(\pi y)$, whose exact solution is $u(t,x,y)=t^\alpha\sin(\pi x)\sin(\pi y)$.
\end{example} 

In this example, the spectral collocation method \cite{trefethen2000spectral,shen2011spectral} is applied in space with $20^2$ Chebyshev--Gauss--Lobatto points.  We set the final time of SOE $T_{\rm soe}=T$, the SOE tolerance $\varepsilon=1\text{e-12}$, and the cut-off time $\Delta t= \tau_2$; see the SOE approximation \cite{jiang2017fast,liao2019unconditional}. 

{\em Failure of direct computation.} We first show that the roundoff problem can result in completely wrong results. 
We test the standard L2 scheme and the fast L2 scheme for different numbers of time steps, using directly the explicit formulas of $a_j^{(k)}$,  $\tilde c_j^{(k)}$ in \eqref{eq:akjexplicitit} and $
{a}^{k,\ell}_{k-1}$, $\tilde{c}^{k,\ell}_{k-1}$ in \eqref{eq:soeexplic}.
This is done by setting the thresholds $\theta_{s,1}=\theta_{s,2}=\theta_{f,1}=\theta_{f,2}=0$ in Algorithm~\ref{alg:akjckj}--\ref{alg:fastack}.
In Table \ref{tabwrongL2}, it is observed that the maximum error, i.e. 
$${\tt err}_{\rm max} \coloneqq \max_{1\leq k\leq N} \|u(t_k)-u^k\|$$
with $\|\cdot\|$ the $L^2$-norm in $\Omega$, can go up to $6.0231\text{e+7}$ for standard L2 scheme and to $4.6860$ for fast L2 scheme, implying that the direct computation is unreliable. 

\begin{table}[!]
\renewcommand\arraystretch{1.1}
\begin{center}
\def\temptablewidth{0.85\textwidth}
\caption{(Example~\ref{exm1})  Maximum $L^2$-errors ${\tt err}_{\rm max}$ of standard L2 scheme with $\alpha=0.4$ (top) and fast L2 scheme with $\alpha=0.5$ (bottom) for different $N$, where the coefficients are computed directly from explicit formulas and the grading parameter is $r=2/\alpha$. }\vspace{0in}\label{tabwrongL2}
{\rule{\temptablewidth}{1pt}}
\begin{tabular*}{\temptablewidth}{@{\extracolsep{\fill}}ccccc}
  $N=200 $ & $N=400 $&$N=800$&$N=1600$&$N=3200$\\ \hline
2.0980e-1&	7.9774e+1&	2.4502e+4&	5.4546e+6&	6.0231e+7\\
  \hline
 2.02887e-4&	1.2529e-1&	1.3868&	5.7361e-1&	4.6860\\
\end{tabular*}
{\rule{\temptablewidth}{1pt}}
\end{center}
\end{table}

{\em Feasibility of Taylor-expansion-based method.} 
Now, we use Algorithm~\ref{alg:akjckj}--\ref{alg:fastack} with the aforementioned default settings, to compute the coefficients $a_j^{(k)}$,  $\tilde c_j^{(k)}$, ${a}^{k,\ell}_{k-1}$, $\tilde{c}^{k,\ell}_{k-1}$. 
The maximum $L^2$-errors  for different $\alpha$  are showed  in Figure-\ref{fig:SartFast} for standard and fast L2-TCTE schemes.
It can be verified that the convergence rates of maximum errors are $\min\{r\alpha, 3-\alpha\}$, which is consistent with the global convergence result in \cite[Theorem 5.2]{kopteva2021error}.

We show the $L^2$-errors at final time $T=1$, i.e.
$${\tt err}_{T} \coloneqq \|u(T)-u^N\|$$
for standard L2-TCTE scheme with different $\alpha$ and $r$ in Figure \ref{fig:L2pterror}, using Algorithm~\ref{alg:akjckj}--\ref{alg:fastack}.
The convergence rate of final-time errors is observed to be $r$ when $r<3-\alpha$ and $3-\alpha$ when $r>3-\alpha$, which is consistent with the pointwise-in-time convergence result in \cite[Theorem 5.2]{kopteva2021error}.
\begin{figure}
    \centering
   \includegraphics[trim={2in 0in 1in 0in},clip,width=1\textwidth]{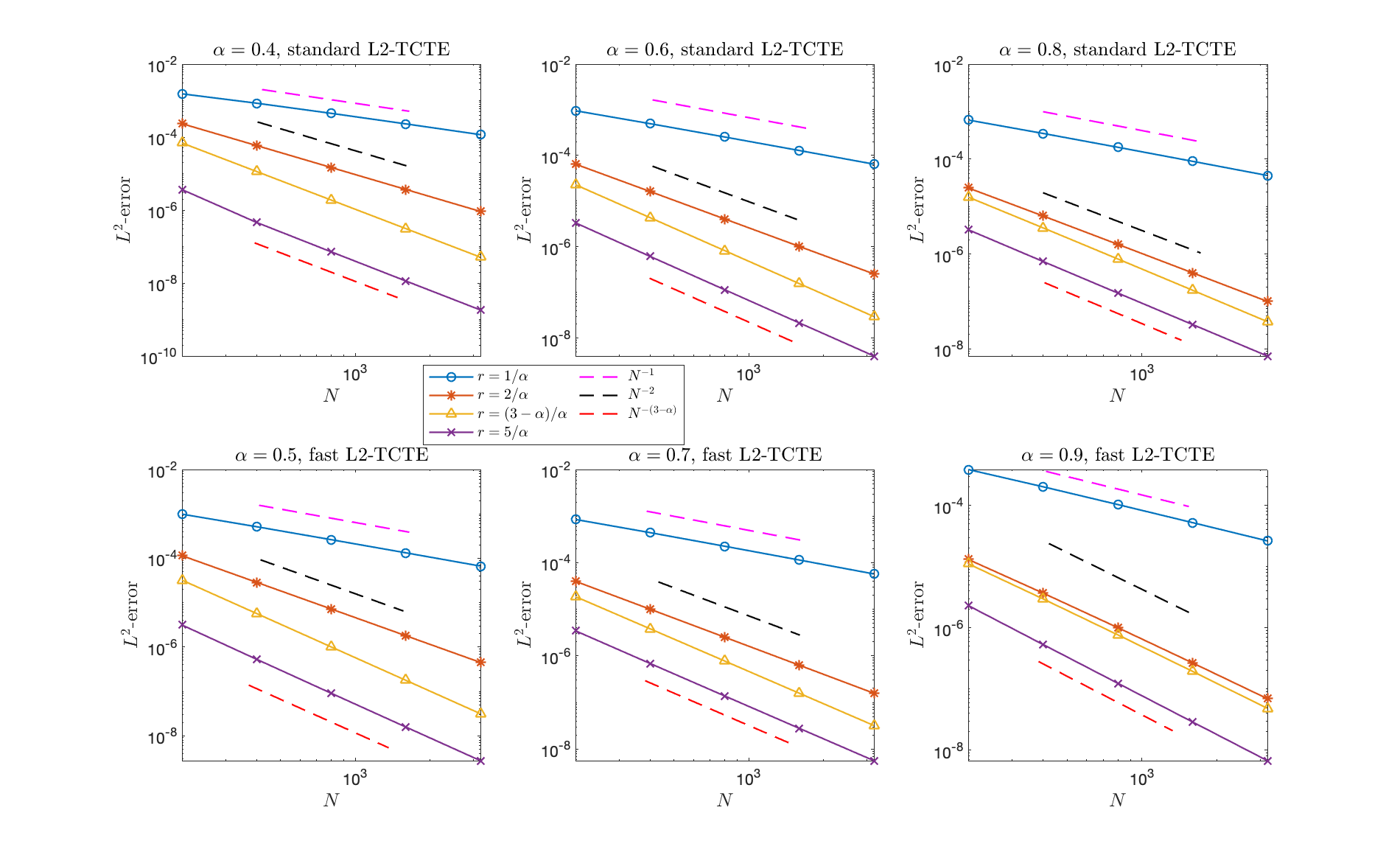}
   \vspace{-0.2in}    
    \caption{(Example~\ref{exm1}) Maximum $L^2$-errors ${\tt err}_{\rm max}$. Top:  Standard L2-TCTE method on the graded meshes with different $r$ and $N$ for $\alpha=0.4$, $\alpha=0.6$, $\alpha=0.8$ from left to right. Bottom: Fast L2-TCTE method on the graded meshes with different $r$ and $N$ for $\alpha=0.5$, $\alpha=0.7$, $\alpha=0.9$ from left to right.}
    \label{fig:SartFast}
\end{figure}
\begin{figure}
    \centering
   \includegraphics[trim={2in 0in 1in 0in},clip,width=1\textwidth]{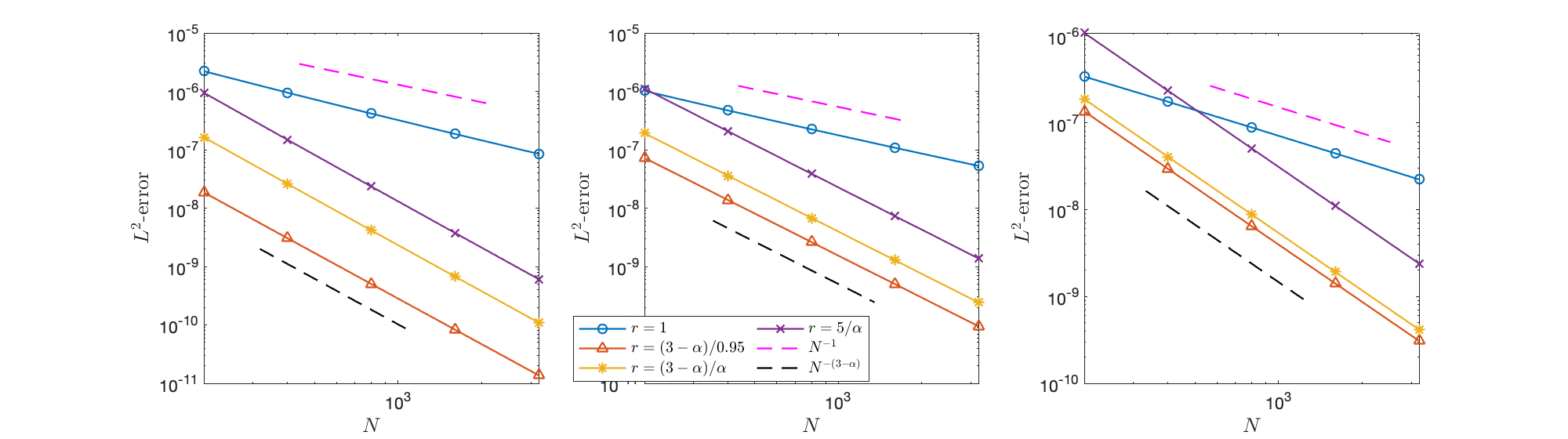}
   \vspace{-0.2in}    
    \caption{(Example~\ref{exm1}) Final-time $L^2$-errors ${\tt err}_{T}$ for the standard L2-TCTE method  on the graded meshes with different $r$ and $N$ for $\alpha=0.4$, $\alpha=0.6$, $\alpha=0.8$ from left to right.}
    \label{fig:L2pterror}
\end{figure}

Denote the $L^2$-error at time $t_k$ by
$$
{\tt err}_{t_k} \coloneqq \|u(t_k)-u^k\|. 
$$
We show the differences of pointwise $L^2$-errors, 
$$
{\tt err}_{t_k}^{\rm GK} -{\tt err}_{t_k}^{\rm TCTE},
$$
between using the standard L2-Gauss  and standard L2-TCTE  methods (the left-hand side of Figure \ref{fig:errfig}) and the fast L2-Gauss and fast L2-TCTE  methods (the right-hand side of Figure \ref{fig:errfig}).
It is observed that the differences are almost the machine error, which verifies the accuracy of our method. 
However, we shall mention that the Gauss--Kronrod quadrature method could be much more expensive than our TCTE method, which will be discussed in the next subsection. 
\begin{figure}
    \centering
    \includegraphics[width=1\textwidth]{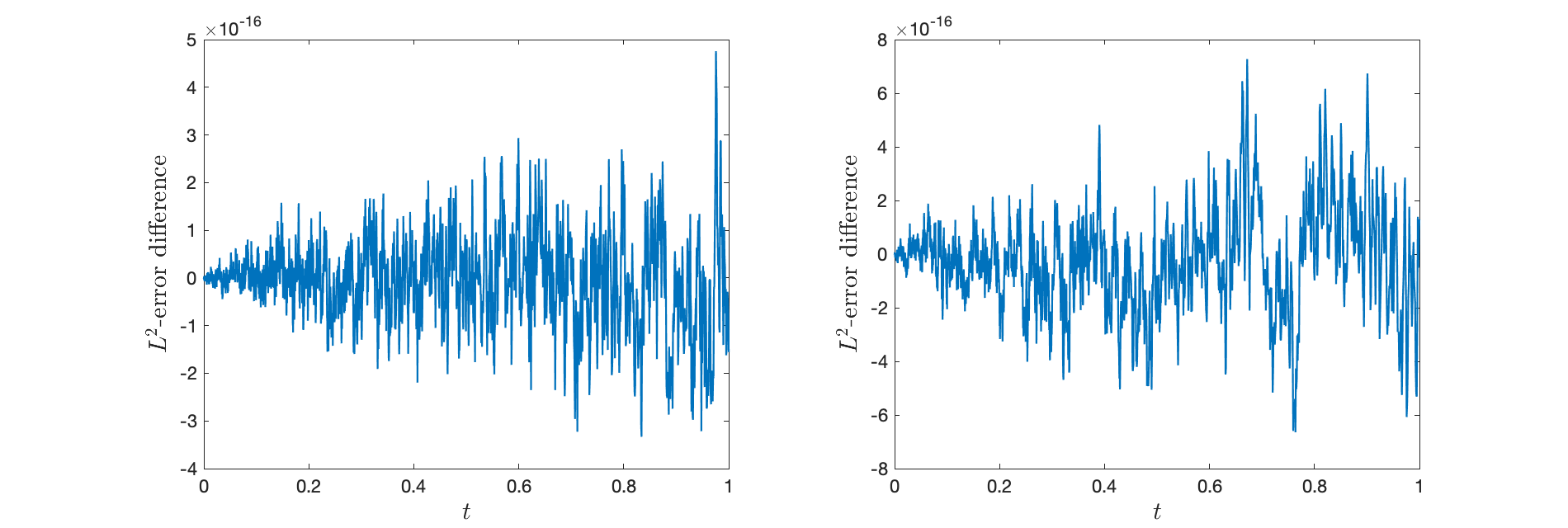}
   \vspace{-0.2in}    
    \caption{(Example~\ref{exm1}) Pointwise $L^2$-error differences between the Gauss--Kronrod quadrature method and the TCTE method  w.r.t. $t$, where $T=1$ and $N=3200$. Left: Standard L2 method with $\alpha=0.6$ and $r=(3-\alpha)/0.95$. Right: Fast L2 method with $\alpha=0.7$ and $r=(3-\alpha)/\alpha$.}
    \label{fig:errfig}
\end{figure}

\subsection{High efficiency of TCTE method}
\begin{example}\label{exm2}
  Consider the subdiffusion equation
  \eqref{eq:subdiffusion} in $\Omega=[-1,1]^2$ with $T=10$ and
 $f(t,x,y)=\Gamma(1+\alpha)(x^2-1)(y^2-1)-2t^\alpha(x^2+y^2-2)$, whose exact solution is $u(t,x,y)=t^\alpha(x^2-1)(y^2-1)$.
\end{example}
In this example, we set the final time of SOE $T_{\rm soe}=T$, the SOE tolerance $\varepsilon=1\text{e-12}$, the cut-off time $\Delta t= \tau_2$ in SOE approximations; see \cite{jiang2017fast,liao2019unconditional}.  
Since the  exact solution is polynomial of degree 2 in $x$ and $y$ direction, we  use the spectral collocation method in space with $5^2$ Chebyshev--Gauss--Lobatto points so that the error caused by space discretization is negligible.
  
Maximum error and $L^2$-error at final time $T=10$  and  CPU time (in seconds) for the  standard L2-Gauss, standard L2-TCTE, fast L2-Gauss and fast L2-TCTE  methods are given in Table~\ref{GaussTaylortimecom}, where  $\alpha=0.6$ and $r=(3-\alpha)/\alpha$.  
We observe that the CPU time of the fast L2 scheme increases approximately linearly w.r.t. $N$, while the CPU time of the standard L2 scheme increases approximately quadratically.   
Optimal convergence rate $3-\alpha$ is achieved, which is consistent with the convergence result \cite[Theorem 5.2]{kopteva2021error}. 
For a fixed $N$,  the errors of the standard L2-Gauss and L2-TCTE methods are almost the same in sense that their differences are approximately the machine error. 
Influenced by the chosen SOE tolerance error $\varepsilon=1\text{e-12}$, the differences of $L^2$-errors between the standard and fast methods are about  $\varepsilon$. 

As one can see, when $N=32000$, the standard L2-TCTE method about 91 times faster than the standard L2-Gauss method, while the fast L2-TCTE method is about 308 times faster than the fast L2-Gauss method in this example.

{\footnotesize
\begin{table}[thp]
\renewcommand\arraystretch{1.1}
\begin{center}
\def\temptablewidth{1\textwidth}
\caption{(Example~\ref{exm2}) Maximum errors, final-time errors, and CPU times (in seconds) computed by standard L2-Gauss, standard L2-TCTE, fast L2-Gauss and fast L2-TCTE methods with $\alpha=0.6$,  $r=(3-\alpha)/\alpha$ and $T=10$ for different numbers of time steps.}\vspace{0in}\label{GaussTaylortimecom}
{\rule{\temptablewidth}{1pt}}
\begin{tabular*}{\temptablewidth}{@{\extracolsep{\fill}}ccccccc}
 & & $N=2000$&$N=4000 $ & $N=8000 $&$N=16000$&$N=32000$\\ \hline
standard L2-Gauss &    $ {\tt err}_{\rm max}$&3.8628e-7&	7.3187e-8&	1.3866e-8&	2.6271e-9&	4.9776e-10\\
& & -- & 2.4000    &2.4000  &  2.4000  &  2.4000\\
 &    ${\tt err}_{T}$&3.1934e-9&	6.0409e-10	&1.1434e-10	&2.1654e-11&	4.0991e-12\\
& & -- &   2.4023  &  2.4013  &  2.4007  &  2.4013\\
  \cline{2-7}
 &CPU time&9.5653\text{e+2}	&3.8583\text{e+3}	&1.5218\text{e+4}	&	6.0095\text{e+4}	&	2.4060\text{e+5}	\\
 \cline{1-7}
  standard  L2-TCTE &    $ {\tt err}_{\rm max}$  &3.8628e-7	&7.3187e-8&	1.3866e-8&	2.6271e-9&	4.9776e-10\\
 && --  &   2.4000    &2.4000  &  2.4000  &  2.4000\\
  &    ${\tt err}_{T}$ &3.1934e-9 &	6.0409e-10 &	1.1434e-10 &	2.1654e-11 &	4.0998e-12\\
   && --  &  2.4023&     2.4013   &  2.4007 &    2.4010\\
  \cline{2-7}
 &CPU time&8.5312&   35.3594 & 147.7656 & 623.7969&	2639.0823
\\
\cline{1-7}
fast L2-Gauss    &    $ {\tt err}_{\rm max}$ &3.8628e-7&	7.3187e-8&	1.3866e-8	&2.6271e-9	&4.9776e-10\\
 && -- &  2.4000  &   2.4000 &    2.4000   &  2.4000\\
  &    ${\tt err}_{T}$ &3.1935e-9&	6.0417e-10&	1.1442e-10&	2.1731e-11&	4.1806e-12\\
& & -- &   2.4021  &  2.4005    & 2.3966  &   2.3779
\\   \cline{2-7}
 &CPU time&134.0000&	307.0156&	634.1875&	1322.6875&	2810.9687
\\\cline{1-7}
fast L2-TCTE &    $ {\tt err}_{\rm max}$ & 3.8628e-7&	7.3187e-8&	1.3866e-8&	2.6271e-9&	4.9776e-10\\
 && -- &  2.4000  &   2.4000 &    2.4000   &  2.4000\\
 &    ${\tt err}_{T}$ &3.1935e-9&	6.0417e-10&	1.1442e-10&	2.1730e-11&	4.1803e-12\\
& & -- &  2.4021 &   2.4005  &  2.3967    &2.3780
\\   \cline{2-7}
 &CPU time&   0.4375 &  0.9062 &   1.9688 &   4.2500 &   9.1250
\\

\end{tabular*}
{\rule{\temptablewidth}{1pt}}
\end{center}
\end{table}}

\subsection{Long time simulation}
For simplicity, we still consider Example \ref{exm2} but replacing the final time $T=10$ with $T=1000$.
We set the final time of SOE $T_{\rm soe}=T$, the SOE tolerance $\varepsilon=1\text{e-14}$, the SOE cut-off time $\Delta t= \tau_2$, and the grading parameter $r=(3-\alpha)/\alpha$. 
Still,  $5^2$ Chebyshev--Gauss--Lobatto points are used in the spectral collocation method for spatial discretization. 
The fast L2-TCTE method is adopted for this long time simulation.

In Table \ref{tab1FastLo}, the maximum $L^2$-errors, the final-time $L^2$-errors, the convergence rates, and the CPU times are reported. 
We can observe that the convergence rates are optimal and what's more important, the CUP time is not large even for very large number of time steps ($N$ is larger than one hundred of thousands). 
To the best of knowledge, there are no existing simulations in literatures with more than $10^6$ time steps. 
The high efficiency of our TCTE method helps to achieve this.

\begin{table}[!]
\renewcommand\arraystretch{1.1}
\begin{center}
\def\temptablewidth{1\textwidth}
\caption{(Example~\ref{exm2}) Maximum errors, final-time errors, and CPU times (in seconds) computed by fast L2-TCTE method with $r=(3-\alpha)/\alpha$ and $T=1000$ for different numbers of time steps.}\vspace{0in}\label{tab1FastLo}
{\rule{\temptablewidth}{1pt}}
\begin{tabular*}{\temptablewidth}{@{\extracolsep{\fill}}ccccccc}
 & & $N=8000$&$N=16000 $ & $N=32000 $&$N=64000$&$N=128000$\\ \hline
$\alpha=0.4$ &    $ {\tt err}_{\rm max}$&7.9773e-8&	1.3157e-8&	2.1702e-9&	3.5795e-10&	5.9040e-11\\
& & -- &  2.6000  &  2.6000   & 2.6000  &  2.6000\\
 &    ${\tt err}_{T}$&4.3198e-11&	7.0758e-12&	1.1718e-12&	1.9549e-13&	4.2172e-14\\
& & -- &   2.6100 &    2.5941 &    2.5836  &   2.2127\\
  \cline{2-7}
 &CPU time& 4.3906   & 6.8438&   13.5156   &27.9844 &  58.1406
\\
\cline{1-7}
$\alpha=0.6$ &    $ {\tt err}_{\rm max}$  &2.1976e-7&	4.1638e-8&	7.8889e-9&	1.4946e-9&	2.8318e-10\\
 && --  &   2.4000    &2.4000  &  2.4000  &  2.4000\\
  &    ${\tt err}_{T}$ &1.1247e-10&	2.1314e-11&	4.0756e-12	&7.6765e-13&	1.3773e-13\\
 && --  &  2.3998   & 2.3867 &   2.4085  &  2.4785\\
  \cline{2-7}
 &CPU time&  2.2031 &   4.8281 &  10.1094  & 21.5000 &  46.1406
\\
\cline{1-7}
$\alpha=0.8$  &    $ {\tt err}_{\rm max}$ &1.3302e-6	&2.9038e-7	&6.3250e-8&	1.3768e-8&	2.9966e-9\\
 && -- & 2.1956   & 2.1988 &   2.1997   & 2.1999\\
  &    ${\tt err}_{T}$ &2.2127e-10&	4.8295e-11&	1.0636e-11&	2.4319e-12&	5.7001e-13\\
& & -- & 2.1959 &    2.1829&     2.1288 &    2.0930
\\   \cline{2-7}
 &CPU time&2.0000  &  4.0312  &  8.9219 &  18.7812 &  40.8125
\\
\end{tabular*}
{\rule{\temptablewidth}{1pt}}
\end{center}
\end{table}

\section{Conclusion}\label{sect5}
In this work, to handle the roundoff error problems in L2-type method, we reformulate the standard and fast L2 coefficients, provide the threshold conditions on $\delta$-cancellation, and propose the Taylor expansions to avoid $\delta$-cancellation when the threshold conditions are not satisfied. 
Numerical experiments show the high accuracy and efficiency of our proposed TCTE method.
In particular, the fast L2-TCTE method can complete long time simulations with hundreds of thousands of time steps in one minute. 

\section*{Acknowledgements}
C. Quan is supported by NSFC Grant 12271241, the fund of the Guangdong Provincial Key Laboratory of Computational Science and Material Design (No. 2019B030301001), and the Shenzhen Science and Technology Program (Grant No. RCYX20210609104358076).

\bibliography{bibfile}
\bibliographystyle{plain}

\end{document}